\documentclass[12pt,reqno]{amsart}

\usepackage[a4paper,margin=1in]{geometry}
\usepackage{amsmath,amssymb,amsthm,mathtools}
\usepackage{enumitem}
\usepackage{microtype}
\usepackage[hidelinks]{hyperref}

\usepackage[T1]{fontenc}
\usepackage{libertinus,libertinust1math}

\theoremstyle{definition}
\newtheorem*{defi}{Definition}
\theoremstyle{plain}
\newtheorem{lemma}{Lemma}
\newtheorem{prop}{Proposition}
\newtheorem{thm}{Theorem}
\newtheorem{cor}{Corollary}

\title{Exact universal normalizations for the G\'al--Koksma lemma}
\author{Ying Wai Lee}

\begin{document}

\begin{abstract}
    The G\'al--Koksma lemma is a standard tool for converting quadratic-mean estimates on consecutive blocks into almost-everywhere bounds for partial sums, without assumptions of independence, mixing, or orthogonality. A natural open problem is to determine exactly which universal growth normalizations are forced by this hypothesis alone. The corresponding universal normalization problem under the abstract consecutive-block second-moment hypothesis is resolved by characterizing exactly which non-decreasing normalizations are valid uniformly over the entire admissible class. The resulting necessary-and-sufficient summability criterion is sharp even for bounded exactly centred systems with constant majorants and exact linear block variance, and determines the critical logarithmic and iterated-logarithmic thresholds.
\end{abstract}

\maketitle

\section{Introduction}\label{sec:introduction}

The G\'al--Koksma principle is a general method for deriving almost-everywhere estimates for partial sums from quadratic-mean control on consecutive blocks. Its usefulness lies in requiring neither independence, mixing, nor orthogonality, making it applicable in settings where probabilistic or orthogonal methods are unavailable. The question considered here is what almost-everywhere growth control is forced by this block second-moment hypothesis alone.

The problem is formulated as follows. Let $(X,\Omega,\mu)$ be a finite measure space, $(F_k)_{k\in\mathbb{N}}$ be a sequence of non-negative real-valued $\Omega$-measurable functions on $X$, $(f_k)_{k\in\mathbb{N}}$ and $(g_k)_{k\in\mathbb{N}}$ be sequences of non-negative real numbers. Define $\Phi:\mathbb{N}\to[0,+\infty)$ by, for any $N\in\mathbb{N}$, 
\begin{align}
\label{eq:def_Phi}
    \Phi(N)\coloneqq\sum_{k=1}^N g_k.
\end{align}
Suppose that $0\leq f_k\leq g_k$ for all $k\in\mathbb{N}$, $\lim_{N\to\infty}\Phi(N)=+\infty$ and there exists $K>0$ such that for any $m,n\in\mathbb{N}$, if $m<n$ then:
\begin{align*}
    \int_X\left(D_n(x)-D_m(x)\right)^2
    \,\mu(\mathrm dx)
    \leq
    K\left(\Phi(n)-\Phi(m)\right)
\end{align*}
where for any $N\in\mathbb{N}$, $D_N:X\to\mathbb{R}$ is defined by, for any $x\in X$,
\begin{align}
\label{eq:def_D_N}
    D_N(x)\coloneqq\sum_{k=1}^N\left(F_k(x)-f_k\right).
\end{align}
Harman~\cite[Lemma~1.5]{Harman1998} proved that for any $\varepsilon>0$ and $\mu$-almost every $x\in X$, as $N\to+\infty$,
\begin{align*}
    D_N(x)
    =
    O_x\!\left(
        \Phi(N)^{1/2}
        \left(\log(\Phi(N)+2)\right)^{3/2+\varepsilon}
        +
        \max_{k\leq N}f_k
    \right)
\end{align*}

The general problem of converting mean-value estimates into almost-everywhere bounds is classical; see, for example,~\cite{Harman1998,Sprindzuk1979}. The G\'al--Koksma principle belongs to a line of arguments developed through Rademacher~\cite{Rademacher1922}, G\'al and Koksma~\cite{GalKoksma1950}, Cassels~\cite{Cassels1950III}, Schmidt~\cite{Schmidt1964}, and Philipp~\cite{Philipp1967}; Harman~\cite{Harman1998} describes this lineage explicitly. The principle remains a recurring tool in metric number theory and related areas. Applications include quantitative Diophantine approximation~\cite{Harman1998,PollingtonVelaniZafeiropoulosZorin2022}, metric estimates and Hausdorff-dimension problems for Weyl sums~\cite{BakerChenShparlinski2022}, and dynamical Borel--Cantelli, shrinking-target, and recurrence problems~\cite{HaydnNicolVaientiZhang2013,LevesleyLiSimmonsVelani2025}. Effective variants of the G\'al–Koksma principle, with explicit exceptional-set bounds, were subsequently established in~\cite{LeeScoones2025}.

Related strong laws and maximal inequalities under more general moment assumptions were developed by M\'oricz~\cite{Moricz1976} and M\'oricz--Serfling--Stout~\cite{MoriczSerflingStout1982}. More recently, Darwiche--Schneider~\cite{DarwicheSchneider2023} obtained G\'al--Koksma-type strong laws with general normalizing sequences for arbitrary dependent sequences. These results provide sufficient conditions in broader settings, whereas the problem considered here is to determine, under the specific consecutive-block second-moment hypothesis, exactly which normalizations are valid uniformly over the entire admissible class.

Harman~\cite[p.~16]{Harman1998} observed that the exponent 1/2 on $\Phi$ cannot in general be improved and remarked that ``\textit{it is not known what is the slowest-growing function which always suffices}''. The present paper resolves the universal normalization problem under this abstract block second-moment hypothesis. This does not imply optimality in any particular arithmetic application, where additional structure may yield substantially smaller normalizations; rather, it identifies the intrinsic upper-class content of the G\'al--Koksma principle.

The answer takes the form of an exact summability criterion for non-decreasing normalizations, together with matching counterexamples whenever the criterion fails. The counterexamples may moreover be chosen within a substantially restricted subclass: the underlying summands are bounded and exactly centred, the majorants are constant, and the consecutive-block second moments have exact linear growth. Thus, the criterion is sharp even under these additional regularity assumptions. Consequently, any universal improvement beyond this upper class must exploit additional structure not contained in the consecutive-block second-moment hypothesis.

\section{Main results}\label{sec:main-results}

One begins by defining the admissible class.
\begin{defi}
Let $(X,\Omega,\mu)$ be a finite measure space with $0<\mu(X)<+\infty$, let $\mathbf{F}=(F_k)_{k\in\mathbb{N}}$ be a sequence of non-negative real-valued $\Omega$-measurable functions on $X$, and let $\mathbf{f}=(f_k)_{k\in\mathbb{N}}$ and $\mathbf{g}=(g_k)_{k\in\mathbb{N}}$ be sequences of non-negative real numbers. The sextuple $(X,\Omega,\mu,\mathbf{F},\mathbf{f},\mathbf{g})$ is said to be an admissible system if it satisfies the following conditions:
\begin{itemize}
    \item for any $k\in\mathbb{N}$, $0\leq f_k\leq g_k$;
    \item $\lim_{N\to\infty}\Phi(N)=+\infty$, where $\Phi$ is defined in~\eqref{eq:def_Phi};
    \item there exists $K>0$ such that, for any $m,n\in\mathbb{N}$, if $m<n$ then
    \begin{align}
    \label{eq:block-L2}
        \int_X\left(D_n(x)-D_m(x)\right)^2\,\mu(\mathrm{d}x)
        \leq K\left(\Phi(n)-\Phi(m)\right),
    \end{align}
    where $D_N$ is defined in \eqref{eq:def_D_N}.
\end{itemize}

Let $H:[1,+\infty)\to[1,+\infty)$ be a non-decreasing function. $H$ is said to be a \emph{universal upper-class normalization} if for any admissible system $(X,\Omega,\mu,\mathbf{F},\mathbf{f},\mathbf{g})$ and $\mu$-almost every $x$, as $N\to+\infty$,
\begin{align}
\label{eq:universal-upper-class}
    D_N(x)
    =O_x\!\left( H(\Phi(N)+1)+\max_{k\leq N} f_k+1 \right)
\end{align}
The exceptional null set may depend on the admissible system, while the implied constant may depend on both the admissible system and the point.
\end{defi}

The main result gives an exact characterization of such normalizations. 

\begin{thm}\label{thm:general-normalization}
Let $H:[1,+\infty)\to[1,+\infty)$ be non-decreasing. $H$ is a universal upper-class normalization if and only if:
\begin{align}
\label{eq:HC}
    \sum_{r\geq1} \frac{2^r r^2}{H(2^r)^2} <+\infty.
\end{align}

If~\eqref{eq:HC} does not hold, then the failure of \eqref{eq:universal-upper-class} is witnessed by an admissible system with $X\coloneqq[0,1]$, $\Omega$ being the Borel $\sigma$-algebra, $\mu$ being the Lebesgue measure on $[0,1]$, $f_k\coloneqq1/2$ and $g_k\coloneqq1$ for all $k\in\mathbb{N}$. Moreover the system also satisfies that for any $k\in\mathbb{N}$, $0\leq F_k\leq1$,
\begin{align*}
    f_k=\int_X F_k(x)\,\mu(\mathrm dx);
\end{align*}
there exists $K>0$ such that for any $m,n\in\mathbb{N}$, if $m<n$ then:
\begin{align}
\label{eq:exact-linear-variance}
    \int_X \left( \sum_{m<k\leq n}(F_k(x)-f_k) \right)^2 \,\mu(\mathrm dx)
    = K(n-m).
\end{align}
For $\mu$-almost every $x\in[0,1]$,
\begin{align}
\label{eq:counterexample-limsup}
    \limsup_{N\to\infty} \frac{|D_N(x)|}{ H(\Phi(N)+1)+\max_{k\leq N}f_k+1 }
    =+\infty
\end{align} 
\end{thm}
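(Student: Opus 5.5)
The plan is to prove the two implications separately: a dyadic-chaining maximal inequality in the $\Phi$-scale for sufficiency, and a concatenation of independent, exactly centred $\pm1$ blocks for necessity.

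\emph{Sufficiency.} Assume \eqref{eq:HC} and fix an admissible system. For each $R\ge1$, consider the indices $N$ with $\Phi(N)<2^{R+1}$. I would build a grid in $\Phi$: for $0\le s\le R+1$, let $n_{s,j}$ be the last index with $\Phi(n)\le j2^{R+1-s}$. Any such $N$ is then reached from $0$ by a chain using at most one grid increment per level $s$, plus a final residual segment. The residual segment has total $g$-mass at most $1$, apart from possibly one single index $k$.

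By Cauchy--Schwarz over the $R+2$ levels and \eqref{eq:block-L2}, the grid part obeys the Rademacher--Menshov-type bound
\begin{align*}
\int_X \max_{\text{grid points } n,\ \Phi(n)<2^{R+1}} D_n^2\,\mathrm d\mu \;\le\; (R+2)\sum_{s}\sum_j K\bigl(\Phi(n_{s,j})-\Phi(n_{s,j-1})\bigr)\;\ll\; K\,2^R R^2 .
\end{align*}
The residual segment is handled by positivity, as in Harman. Since $F_k\ge0$, for $a\le N\le b$ with $a,b$ consecutive finest-level grid points we have
\begin{align*}
D_a-\sum_{a<k\le N}f_k\;\le\; D_N\;\le\; D_b+\sum_{N<k\le b}f_k .
\end{align*}
The $f$-sums here are at most $1+\max_{k\le N}f_k$. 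Chebyshev then gives
\begin{align*}
\mu\Bigl(\max_{\text{grid},\,\Phi<2^{R+1}}|D_n|>H(2^{R})\Bigr)\ll \frac{K\,2^R R^2}{H(2^R)^2},
\end{align*}
which is summable by \eqref{eq:HC}. Borel--Cantelli therefore yields, for almost every $x$ and all large $R$, the bound $|D_N|\le H(2^R)+\max_{k\le N}f_k+1$ whenever $2^R\le\Phi(N)+1<2^{R+1}$. Monotonicity of $H$ then gives \eqref{eq:universal-upper-class}.

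\emph{Necessity.} Assume \eqref{eq:HC} fails. First I would replace $H$ by a slightly larger normalization $\tilde H=H\cdot M$, with $M\to\infty$ slowly, chosen so that $\sum_r 2^r r^2/\tilde H(2^r)^2$ still diverges. I would then set $p_r\coloneqq\min(1,c\,2^r r^2/\tilde H(2^{r+1})^2)$, so that $\sum_r p_r=\infty$.

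Take $F_k=\tfrac12(1+\varepsilon_k)$ on $[0,1]$, where the $\varepsilon_k$ are $\pm1$-valued, have mean zero, and are pairwise orthogonal. Orthogonality gives \eqref{eq:exact-linear-variance} with $K=1/4$, and $f_k=1/2$, $g_k=1$, $\Phi(N)=N$. The indices are grouped into consecutive blocks $B_r$ of length $2^r$. The signs in different blocks are built from disjoint sets of binary digits of $x$, so the blocks are independent.

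Within $B_r$, the goal is an exactly orthogonal $\pm1$ system such that, on an event $E_r$ of measure $\asymp p_r$, some partial sum inside the block exceeds $\tilde H(2^{r+1})$. This is the extremal case of the Chebyshev bound above, and it is consistent because $\tilde H(2^{r+1})\le 2^r$ is automatic whenever $p_r$ is non-trivial.

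Since the $E_r$ are independent and $\sum_r\mu(E_r)=\infty$, the second Borel--Cantelli lemma shows that almost every $x$ lies in infinitely many $E_r$. At such $r$, either $|D|$ at the start of the block or $|D|$ at the maximizing index is at least $\tfrac12\tilde H(2^{r+1})$. Since $\tilde H/H\to\infty$, this gives \eqref{eq:counterexample-limsup}.

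\emph{Main obstacle.} The hard step is the block construction: a Menshov-type system that is simultaneously $\pm1$-valued (hence bounded), exactly orthogonal, and concentrated, with maximal partial sum of order $\sqrt{n/p}\,\log n$ on a set of measure $p$. My first attempt would start from a Menshov orthonormal system $\varphi_1,\dots,\varphi_m$ in $L^2(A)$ on a set $A$ of measure $p$, with $m\approx pn$ and $|\varphi_j|\ll p^{-1/2}$ up to logarithms, whose maximal partial sum is $\gg\sqrt m\log m$ on $A$. I would then realize each $\varphi_j$ as a run of about $1/p$ consecutive $\pm1$ functions. On $A$ these carry signs following $\varphi_j$ through dyadic level sets. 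Off $A$ they are built from fresh independent Rademacher digits, chosen to restore exact mean zero and orthogonality.

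Checking that this discretisation keeps exact orthogonality while losing at most a constant in the maximal sum is where I expect most of the work. The fallback, if it fails, is to build the block directly from Walsh functions with a permuted order, following the classical sharpness proof of the $\log^2$ factor.
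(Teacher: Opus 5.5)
Your sufficiency argument is essentially the paper's: a dyadic grid in the $\Phi$-scale, Cauchy--Schwarz over $O(R)$ levels to bound the second moment of the grid maximum by $\ll K R^2 2^R$, Chebyshev, Borel--Cantelli under \eqref{eq:HC}, and positivity of $F_k$ to interpolate between consecutive grid points. One small repair is needed. Your chains start at index $0$, but \eqref{eq:block-L2} is only assumed for $1\le m<n$, so $\int_X D_n^2\,\mathrm d\mu$ is not controlled; it is infinite whenever $F_1\notin L^2(\mu)$. As in the paper, run the argument for $D_n-D_1$ and absorb the finite quantity $|D_1(x)|$ into the $x$-dependent constant.

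The necessity direction has a genuine gap. Your surrounding bookkeeping is fine: independent blocks, second Borel--Cantelli, $\tilde H=HM$, the index shift, and discarding the $r$ with summably small $p_r$. But that only reduces the problem to your block lemma, which you do not prove. The lemma asks, for all $n$ and $p$, for a uniformly bounded, exactly orthogonal, mean-zero system of length $n$ whose maximal partial sum exceeds $c\sqrt{n/p}\,\log n$ on a set of measure $\gg p$. That is a localized quantitative form of Tandori's divergence theorem for uniformly bounded orthonormal systems, i.e.\ the entire difficulty of this direction.

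Your sketch also runs into two concrete obstacles.
\begin{itemize}
\item The classical Menshov functions exceed their $L^2$ normalization by a factor of order $\sqrt m$ near the diagonal. So they cannot be reproduced by runs of about $1/p$ signs, and producing a nearly bounded Menshov system is already the hard part of the bounded-case sharpness.
\item Exact orthogonality with $\pm1$-valued compensators is a realizability problem. Correlation matrices of $\pm1$ functions form the cut polytope, a proper subset of all positive semidefinite correlation matrices, so ``fresh Rademacher digits chosen to restore orthogonality'' is not automatic.
\end{itemize}

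The paper sidesteps all of this. Lemma~\ref{lem:series-comparison} converts the failure of \eqref{eq:HC} into $\sum_N(\log N/H(N))^2=\infty$. The paper then quotes Tandori's theorem in Leindler's partial-sum form with $l_N=H(N+1)+3/2$ and drops $\psi_0$. It enforces mean zero by the signed two-branch lift, $+\psi_k\circ T$ on $[0,1/2)$ and $-\psi_k\circ T$ on $[1/2,1]$. This preserves orthonormality and gives $|\sum_{k\le N}\varphi_k(x)|=|\sum_{k\le N}\psi_k(Tx)|$ with $T$ measure-preserving. Finally it sets $F_k=\tfrac12+\varphi_k/(2B)$. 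The theorem only asks for $0\le F_k\le 1$, so your self-imposed $\pm1$ requirement is unnecessary; any uniformly bounded mean-zero orthonormal system suffices.
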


The summability condition~\eqref{eq:HC} is sharp even within the much narrower subclass of bounded, exactly centred systems with constant majorants and exact linear block variance. The universal problem is a genuine upper-class classification rather than the search for a single distinguished ``slowest-growing'' normalization. Thus, the endpoint is naturally expressed as an upper-class criterion rather than by a single distinguished logarithmic normalization.

The criterion immediately yields the logarithmic thresholds suggested by the estimate due to Harman.

\begin{cor}
\label{cor:log-endpoint}
\begin{enumerate}[leftmargin=*]
\item For any $\alpha\geq0$, the normalization $H_\alpha$, where for any $t\geq1$,
\begin{align*}
    H_\alpha(t)
    \coloneqq
    \begin{cases}
        t^{1/2}(\log{t})^\alpha,
        &t\geq3\\
        1,
        &\text{otherwise}
    \end{cases}
\end{align*}
is a universal upper-class normalization if and only if
$\alpha>3/2$.
\item For any $\beta\geq0$, the normalization $H_{3/2,\beta}$, where for any $t\geq1$,
\begin{align*}
    H_{3/2,\beta}(t)
    \coloneqq
    \begin{cases}
        t^{1/2}(\log{t})^{3/2}(\log{\log{t}})^\beta,
        &t\geq16\\
        1,
        &\text{otherwise}
    \end{cases}
\end{align*}
is a universal upper-class normalization if and only if
$\beta>1/2$.
\end{enumerate}
At and below either threshold, a counterexample exists in the rigid form described in Theorem~\ref{thm:general-normalization}.
\end{cor}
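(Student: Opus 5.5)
The corollary follows from Theorem~\ref{thm:general-normalization} by checking the summability criterion~\eqref{eq:HC} for each family. The only work is to evaluate $H(2^r)$ for large $r$, plug it into the series, and compare with the Bertrand-type series $\sum r^{-a}(\log r)^{-b}$. Both $H_\alpha$ and $H_{3/2,\beta}$ map $[1,+\infty)$ into $[1,+\infty)$, so the theorem applies once monotonicity is checked.

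\emph{Monotonicity and range.} For $t\ge3$ we have $\log t>1$, so $t^{1/2}(\log t)^\alpha\ge\sqrt3>1$. The map $t\mapsto t^{1/2}(\log t)^\alpha$ is increasing on $[3,+\infty)$ for $\alpha\ge0$. Hence the jump from $1$ to a value exceeding $1$ at $t=3$ is upward, and $H_\alpha$ is non-decreasing with values in $[1,+\infty)$.

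Similarly, for $t\ge16$ we have $\log\log t=\log\log 16\approx1.02>1$. Therefore $H_{3/2,\beta}(t)\ge 4(\log16)^{3/2}>1$ there, and the function is increasing on $[16,+\infty)$ as a product of non-decreasing factors that are at least $1$. Thus $H_{3/2,\beta}$ is also non-decreasing with values in $[1,+\infty)$.

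\emph{The series.} For $r\ge2$ we have $H_\alpha(2^r)=2^{r/2}(r\log2)^\alpha$, so the summand in~\eqref{eq:HC} is
\[
\frac{2^r r^2}{2^r (r\log 2)^{2\alpha}} = (\log2)^{-2\alpha}\, r^{2-2\alpha}.
\]
This series converges if and only if $2\alpha-2>1$, that is, $\alpha>3/2$. This proves part~(1).

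For $r\ge4$ we have $H_{3/2,\beta}(2^r)=2^{r/2}(r\log2)^{3/2}(\log(r\log2))^\beta$, so the summand is
\[
\asymp \frac{1}{r(\log(r\log 2))^{2\beta}} \asymp \frac{1}{r(\log r)^{2\beta}}.
\]
By the Cauchy condensation test, or the integral test with $\int dr/(r(\log r)^{2\beta})$, this converges if and only if $2\beta>1$, that is, $\beta>1/2$. This proves part~(2).

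\emph{Conclusion.} Changing finitely many terms does not affect convergence, so the behaviour of $H$ for small $r$ is irrelevant. Theorem~\ref{thm:general-normalization} then gives the "if and only if" in both parts. Whenever the criterion fails ($\alpha\le3/2$, or $\beta\le1/2$), the theorem's second part supplies a counterexample in the stated rigid form on $[0,1]$.

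The only mildly delicate step is the monotonicity check at the gluing points $t=3$ and $t=16$. The threshold $16$ is chosen precisely so that $\log\log t\ge1$ there; this makes the power $(\log\log t)^\beta$ harmless for every $\beta\ge0$ and keeps the function non-decreasing.
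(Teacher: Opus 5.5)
Your proposal is correct and follows the route the paper intends. The paper says the corollary follows immediately from criterion~\eqref{eq:HC}, and you carry this out: you compute $H(2^r)$ for large $r$, reduce to the series $\sum r^{2-2\alpha}$ and $\sum r^{-1}(\log r)^{-2\beta}$, and apply Theorem~\ref{thm:general-normalization} for both directions and for the rigid counterexample. You also check that $H_\alpha$ and $H_{3/2,\beta}$ are non-decreasing maps into $[1,+\infty)$, using $\log 3>1$ and $\log\log 16>1$. The paper leaves this implicit, but it is needed for the theorem to apply.
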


More generally, once all preceding logarithmic exponents are fixed at their critical values, condition~\eqref{eq:HC} reduces to the classical convergence test involving the next iterated logarithm. Thus, the endpoint behaviour is not captured by any one member of the usual logarithmic or iterated-logarithmic scales; the intrinsic statement is the series criterion~\eqref{eq:HC} itself.

The two directions of Theorem~\ref{thm:general-normalization} are governed by matching maximal and divergence mechanisms. For sufficiency, a dyadic decomposition of the block estimate, followed by Cauchy--Schwarz, yields a scale-wise maximal estimate whose Borel--Cantelli summability condition is precisely~\eqref{eq:HC}; Borel--Cantelli then gives the almost-everywhere bound. This is the same logarithmic-square maximal mechanism that underlies the classical Rademacher--Menshov inequality~\cite{Rademacher1922,Menshov1923}, although no orthogonality is assumed here. For necessity, the same threshold appears in the divergence theorem of Tandori~\cite{Tandori1957I,Leindler1960} for uniformly bounded orthonormal systems. A signed two-branch lift enforces zero mean while preserving the absolute values of the relevant partial sums, and a centred affine embedding then transfers the orthogonal-series obstruction into the non-negative admissible class.

\section{Proof of sufficiency}\label{sec:upper-proof}

The proof follows the same basic dyadic decomposition as in the original argument of Harman~\cite{Harman1998}, and is closely related in spirit to the effective treatment in~\cite{LeeScoones2025}. The main difference is that the normalization is not fixed in advance: the argument is arranged so that an arbitrary non-decreasing normalization can be carried through the maximal estimate, allowing the resulting summability condition to be read off directly. Thus the novelty here lies not in a new dyadic method, but in using the classical argument in a form suitable for an exact characterization of all universally valid normalizations.

Suppose~\eqref{eq:HC} holds. Define $n_0\coloneqq0$ and for any $j\in\mathbb{N}$,
\begin{align}
\label{eq:nj-definition}
    n_j
    \coloneqq
    \max\left\{n\in\mathbb{N}\cup\{0\}:\Phi(n)<j\right\},
\end{align}
with the convention that $\Phi(0)\coloneqq0$. Since $\lim_{N\to\infty}\Phi(N)=+\infty$, one obtains that for any $j\in\mathbb{N}$
\begin{align}
\label{eq:nj-basic}
    \Phi(n_j)<j\leq\Phi(n_j+1).
\end{align}
Note that $(n_j)_{j\in\mathbb{N}\cup\{0\}}$ is non-decreasing.

Note that for any $j\in\mathbb{N}$, there exists a unique $(b_{j,v})_{v=0,\ldots,\lfloor\log j/\log2\rfloor}\in\{0,1\}$ such that $j$ has the following binary expansion:
\begin{align*}
    j
    =
    \sum_{v=0}^{\lfloor\log j/\log2\rfloor}
    2^v b_{j,v}.
\end{align*}
Define, for any $j\in\mathbb{N}$, $\mathfrak B_j$ to be the corresponding collection of dyadic blocks in the standard left-to-right decomposition of $(0,n_j]$; that is,
\begin{align*}
    \mathfrak B_j
    \coloneqq
    \left\{
        (i,s)\in
        \{0,1,\ldots,j\}\times
        \left\{0,1,\ldots,\left\lfloor\frac{\log{j}}{\log{2}}\right\rfloor\right\}:
        b_{j,s}=1,
        i=
        \sum_{v=s+1}^{\lfloor\log j/\log2\rfloor}
        2^{v-s}b_{j,v}
    \right\}.
\end{align*}
Note that for any $j\in\mathbb{N}$, the cardinality of $\mathfrak B_j$ satisfies:
\begin{align}
\label{eq:binary-block-count}
    \#\mathfrak B_j
    \leq
    1+\left\lfloor\frac{\log j}{\log2}\right\rfloor,
\end{align}
and $(0,n_j]$ is decomposed into a disjoint union:
\begin{align*}
    (0,n_j]
    = \bigcup_{(i,s)\in\mathfrak B_j} \left(n_{i2^s},n_{(i+1)2^s}\right].
\end{align*}

Define, for any $(i,s)\in(\mathbb{N}\cup\{0\})\times(\mathbb{N}\cup\{0\})$, $\mathcal{F}_{i,s}:X\to\mathbb{R}$ by, for any $x\in X$,
\begin{align*}
    \mathcal{F}_{i,s}(x)
    \coloneqq
    \sum_{k=\max\{2,n_{i2^s}+1\}}^{n_{(i+1)2^s}}
    (F_k(x)-f_k),
\end{align*}
with the convention that when the lower endpoint is greater than the upper endpoint, the sum is defined as 0. Define, for any $R\in\mathbb{N}\cup\{0\}$, the dyadic energy $\mathcal{G}_R:X\to[0,+\infty)$ by, for any $x\in X$,
\begin{align*}
    \mathcal{G}_R(x)
    \coloneqq
    \sum_{s=0}^R
    \sum_{i=0}^{2^{R+1-s}-1}
    |\mathcal{F}_{i,s}(x)|^2.
\end{align*}
For any $j\in\mathbb{N}$ and $R\in\mathbb{N}\cup\{0\}$, if $j<2^{R+1}$, then every block occurring in $\mathfrak B_j$ is represented in $\mathcal{G}_R$.

Pick any $i\in\mathbb{N}\cup\{0\}$ and $s\in\mathbb{N}\cup\{0\}$. By the block hypothesis~\eqref{eq:block-L2}, one obtains:
\begin{align*}
    \int_X\left|\mathcal{F}_{i,s}(x)\right|^2\,\mu(\mathrm dx)
    \leq K\left(\Phi\left(n_{(i+1)2^s}\right)-\Phi\left(n_{i2^s}\right)\right).
\end{align*}
If $n_{i2^s}=0$ and $n_{(i+1)2^s}\leq1$, then $\mathcal{F}_{i,s}=0$ and the estimate is trivial. If $n_{i2^s}=0$ and $n_{(i+1)2^s}\geq2$, the sum starts at $k=2$, so~\eqref{eq:block-L2} is applied with $m=1$; the displayed estimate remains valid, and in fact a stronger bound is obtained. By summing over the indices, one obtains that for any $R\in\mathbb{N}\cup\{0\}$, 
\begin{align}
\label{eq:energy-bound}
    \int_X\mathcal{G}_R\,\mu(\mathrm dx)
    \leq K\sum_{s=0}^R\Phi(n_{2^{R+1}})
    < K(R+1)2^{R+1}.
\end{align}

Define, for any $R\in\mathbb{N}\cup\{0\}$, $\mathcal{M}_R:X\to[0,+\infty)$ by, for any $x\in X$,
\begin{align*}
    \mathcal{M}_R(x)
    \coloneqq
    \max_{\{j\in\{1,\ldots,2^{R+1}-1\}:n_j\geq1\}}\left|D_{n_j}(x)-D_1(x)\right|,
\end{align*}
with the convention that the maximum over an empty index is $0$. By the binary decomposition, Cauchy--Schwarz inequality, and~\eqref{eq:binary-block-count}, one obtains that for any $R\in\mathbb{N}\cup\{0\}$ and $x\in X$,
\begin{align*}
    \mathcal{M}_R(x)^2
    \leq (R+1)\mathcal{G}_R(x);
\end{align*}
hence, one obtains the following scale-wise maximal estimate behind the universal upper-bound criterion:
\begin{align}
\label{eq:maximal-L2}
    \int_X\mathcal{M}_R(x)^2\,\mu(\mathrm dx)
    < K(R+1)^2 2^{R+1}.
\end{align}

Define, for any $R\in\mathbb{N}\cup\{0\}$, the bad event set $E_R$ by:
\begin{align*}
    E_R
    \coloneqq \left\{x\in X:\mathcal{M}_R(x)>H\left(2^R\right)\right\}.
\end{align*}
By Markov inequality and~\eqref{eq:maximal-L2}, one obtains that for any $R\in\mathbb{N}\cup\{0\}$,
\begin{align*}
    \mu{\left(E_R\right)}
    \leq
    K\frac{2^{R+1}(R+1)^2}{\left(H\left(2^R\right)\right)^2}.
\end{align*}
By~\eqref{eq:HC} and Borel--Cantelli lemma, one obtains that there exists $X_0\subset X$ such that $\mu{(X_0)}=0$ and for any $x\in X\setminus X_0$, there exists $R_x\in\mathbb{N}$ such that $R_x>\log{\max{\{\Phi(1),1\}}}/\log{2}$ and for any $R\in\mathbb{N}$, if $R\geq R_x$ then:
\begin{align}
\label{eq:eventual-maximal-bound}
    \mathcal{M}_R(x)\leq H\left(2^R\right).
\end{align}
Pick any $j\in\mathbb{N}$. Suppose $R_j\coloneqq\lfloor{\log j}/{\log2}\rfloor\geq R_x$. Since $2^{R_j}\leq j<2^{R_j+1}$, one obtains from~\eqref{eq:eventual-maximal-bound} that:
\begin{align}
\label{eq:grid-bound}
    \left|D_{n_j}(x)\right|
    \leq \left|D_1(x)\right|+H\left(2^{R_j}\right)
    \leq \left|D_1(x)\right|+H(j).
\end{align}

It remains to pass from the grid points to large positive integers. Pick any $x\in X\setminus X_0$ and $N\in\mathbb{N}$. Suppose:
\begin{align*}
    j_N
    \coloneqq
    \lfloor\Phi(N)\rfloor\geq 1.
\end{align*}
and $R_{j_N}\geq R_x$. Since $j_N\leq\Phi(N)<j_N+1$, one obtains from~\eqref{eq:nj-definition} that:
\begin{align*}
    n_{j_N}<N\leq n_{j_N+1}.
\end{align*}
Note that for any $k\in\mathbb{N}$, $F_k(x)\geq0$ and:
\begin{align*}
    \sum_{k\leq n_{j_N}}F_k(x)
    \leq
    \sum_{k\leq N}F_k(x)
    \leq
    \sum_{k\leq n_{j_N+1}}F_k(x).
\end{align*}
Since $n_{j_N}+1\leq N$ and for any $k\in\mathbb{N}$, $f_k\leq g_k$, one obtains from~\eqref{eq:nj-basic} that:
\begin{align*}
    \sum_{k=n_{j_N}+1}^{n_{j_N+1}}f_k
    =
    f_{n_{j_N}+1}
    +
    \sum_{k=n_{j_N}+2}^{n_{j_N+1}}f_k
    \leq
    \max_{k\leq N}f_k
    +
    \Phi{\left(n_{j_N+1}\right)}-\Phi{\left(n_{j_N}+1\right)}
    <
    \max_{k\leq N}f_k+1;
\end{align*}
hence, 
\begin{align*}
    \left|D_N(x)\right|
    \leq
    \max\left\{
        \left|D_{n_{j_N}}(x)\right|,
        \left|D_{n_{j_N+1}}(x)\right|
    \right\}
    +
    \max_{k\leq N}f_k+1.
\end{align*}
By~\eqref{eq:grid-bound}, one obtains:
\begin{align*}
    \max\left\{
        \left|D_{n_{j_N}}(x)\right|,
        \left|D_{n_{j_N+1}}(x)\right|
    \right\}
    \leq
    \left|D_1(x)\right|+H(j_N+1).
\end{align*}
Since $j_N+1\leq\Phi(N)+1$ and $H$ is non-decreasing, the required almost-everywhere upper bound follows. This completes the proof of the sufficiency part of Theorem~\ref{thm:general-normalization}.

\section{Proof of necessity}\label{sec:lower-proof}

The necessity argument is based on the divergence theorem of Tandori~\cite{Tandori1957I} for uniformly bounded orthonormal systems, in the normalized partial-sum form recorded by Leindler~\cite{Leindler1960}. The main point here is the transference from that orthogonal setting to Harman's admissible class: a signed lift first enforces zero mean while preserving the divergence of the partial sums, and a simple affine embedding then yields a bounded non-negative system with exact centering and linear block variance.

The same threshold admits an equivalent non-dyadic formulation, which also makes the connection with Tandori's theorem explicit. Lemma~\ref{lem:series-comparison} records the dyadic--discrete series comparison.
\begin{lemma}\label{lem:series-comparison}
Let $H:[1,+\infty)\to[1,+\infty)$ be a non-decreasing function.
\begin{align*}
    \sum_{r\in\mathbb{N}}\frac{2^r r^2}{H(2^r)^2}=+\infty
\end{align*}
if and only if:
\begin{align*}
    \sum_{N=2}^{\infty}\left(\frac{\log{N}}{H(N)}\right)^2=+\infty.
\end{align*}
\end{lemma}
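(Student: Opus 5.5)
This is a Cauchy condensation argument adapted to a monotone $H$. I will show that the two series dominate each other up to constant factors, block by block over the dyadic ranges $2^r\le N<2^{r+1}$.

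Fix $r\ge1$ and consider $N\in[2^r,2^{r+1})$. There are $2^r$ such integers, and on this range $r\log 2\le\log N<(r+1)\log 2$. Since $H$ is non-decreasing, $H(2^r)\le H(N)\le H(2^{r+1})$. This gives the two-sided block estimate
\begin{align*}
    \frac{2^r r^2(\log 2)^2}{H(2^{r+1})^2}
    \le
    \sum_{2^r\le N<2^{r+1}}\left(\frac{\log N}{H(N)}\right)^2
    \le
    \frac{2^r (r+1)^2(\log 2)^2}{H(2^r)^2}.
\end{align*}

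For the upper comparison, I sum the right-hand inequality over $r\ge1$. Using $(r+1)^2\le4r^2$, the series $\sum_{N\ge2}(\log N/H(N))^2$ is at most $4(\log2)^2\sum_{r\ge1}2^rr^2/H(2^r)^2$. So divergence of the discrete series forces divergence of the dyadic one.

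For the lower comparison, I reindex the left-hand inequality with $r'=r+1$. Since $2^r r^2=2^{r'-1}(r'-1)^2\ge 2^{r'}r'^2/8$ for $r'\ge2$, the discrete series is at least
\begin{align*}
    \frac{(\log 2)^2}{8}\sum_{r'\ge2}\frac{2^{r'}r'^2}{H(2^{r'})^2}.
\end{align*}
The omitted term $r'=1$ is finite, because $H\ge1$. Hence divergence of the dyadic series forces divergence of the discrete series.

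Together the two comparisons give the equivalence. None of the steps is a real obstacle. The only point needing care is the index shift in the lower bound: the left endpoint of each block is controlled through $H(2^{r+1})$, not $H(2^r)$, so the sum must be reindexed and the boundary term $r'=1$ checked to be finite. Monotonicity of $H$ is used only to bound $H(N)$ between its values at consecutive powers of two.
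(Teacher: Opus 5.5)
Your proof is correct and takes the same route as the paper: a two-sided comparison on each dyadic block that uses only the monotonicity of $H$. The paper indexes the blocks as $2^{r-1}\le N<2^r$, bounding by $H(2^r)$ below and $H(2^{r-1})$ above, and leaves to the reader the constant factors, the index shift and the finite boundary term that you write out.
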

\begin{proof}
Notice that for any $r\in\mathbb{N}$,
\begin{align*}
    \frac{2^{r-1}(r-1)^2(\log2)^2}{H(2^r)^2}
    \leq \sum_{2^{r-1}\leq N<2^r}\left(\frac{\log{N}}{H(N)}\right)^2
    \leq \frac{2^{r-1}r^2(\log2)^2}{H(2^{r-1})^2}.
\end{align*}
\end{proof}

The following normalized partial-sum formulation is stated explicitly by Leindler~\cite[Satz~B]{Leindler1960}; the original result is due to Tandori~\cite{Tandori1957I}.
\begin{prop}\label{prop:Tandori-partial-sums}
Let $(l_N)_{N\in\mathbb{N}}$ be a positive non-decreasing sequence. Suppose:
\begin{align*}
    \sum_{N\in\mathbb{N}}
    \left(\frac{\log N}{l_N}\right)^2
    =+\infty.
\end{align*}
Then there exists a uniformly bounded real-valued orthonormal system $(\psi_n)_{n\in\mathbb{N}\cup\{0\}}$ in $L^2([0,1])$ such that for Lebesgue-almost every $x\in[0,1]$,
\begin{align*}
    \limsup_{N\to\infty}
    \frac{1}{l_N}
    \left|
        \sum_{n=0}^{N}\psi_n(x)
    \right|
    =+\infty.
\end{align*}
\end{prop}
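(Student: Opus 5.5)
Since this proposition is quoted from the literature (Tandori~\cite{Tandori1957I}, in Leindler's form~\cite[Satz~B]{Leindler1960}), the paper can simply cite it. If I had to reconstruct the argument, I would build $(\psi_n)$ as a concatenation of finite \emph{Menshov-type blocks}. The intended building block is the following lemma. There are absolute constants $C,c>0$ such that for every $m\geq2$ there exist real functions $\varphi^{(m)}_1,\ldots,\varphi^{(m)}_m$ on $[0,1]$ with three properties: they are orthonormal, $|\varphi^{(m)}_k|\leq C$, and
\begin{align*}
    \max_{a<b\leq m}\Big|\sum_{a<k\leq b}\varphi^{(m)}_k(x)\Big|\geq c\,\sqrt m\,\log m
\end{align*}
on a set of measure at least $c$. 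This is the bounded analogue of Menshov's logarithmic example, obtainable from a discretized Hilbert-matrix construction. It shows that the Rademacher--Menshov factor $\log m$ of~\eqref{eq:maximal-L2} is attained for bounded systems.

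The second step is to choose block lengths and normalizations from the divergent series. Using Lemma~\ref{lem:series-comparison}, the hypothesis is equivalent to
\begin{align*}
    \sum_r \frac{2^r r^2}{l_{2^r}^2}=+\infty .
\end{align*}
I would pass to a block decomposition of $\mathbb{N}$ into consecutive intervals $I_r$ with $|I_r|\asymp 2^r$. On $I_r$ I would place rescaled copies of the Menshov block, taking a linear combination of several copies so that the coefficient vector has unit norm. The aim is to make the event
\begin{align*}
    A_r \coloneqq \Big\{\max_{N\in I_r}|S_N-S_{\min I_r-1}|\geq \lambda_r\, l_{\max I_r}\Big\},\qquad \lambda_r\to\infty,
\end{align*}
have probability of order $2^r r^2/(\lambda_r^2 l_{2^r}^2)$, with $\sum_r\mu(A_r)=+\infty$. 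To obtain approximate independence of the $A_r$, so that the second Borel--Cantelli lemma applies, I would realise each block on its own coordinate of a product space $[0,1]^{\mathbb{N}}$ and transport back to $[0,1]$ by a measure-preserving isomorphism. Products of functions living on distinct coordinates then keep the system orthonormal, and a uniform bound is preserved provided each $\psi_n$ depends on one coordinate only, with orthogonality across blocks enforced by multiplying by Rademacher-type signs on a further coordinate. With independence, $\mu(\limsup A_r)=1$.

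The final step converts increments into partial sums. On $A_r$, the inequality $|S_N-S_{a}|\geq\lambda_r l_N$ forces $\max(|S_N|,|S_a|)\geq \tfrac12\lambda_r l_N$. Since $l$ is non-decreasing and $a<N$, we also have $|S_a|/l_a\geq\tfrac12\lambda_r$. Hence $\limsup |S_N|/l_N=+\infty$ almost everywhere.

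I expect the main obstacle to be the second step. A divergent series does not make individual terms large, so a single block per dyadic scale gives only $\mu(A_r)\asymp \min\{1,2^r r^2/(\lambda_r l_{2^r})^2\}$, and this may be tiny for every $r$. One must instead group many scales and place Menshov blocks of \emph{varying} lengths with small coefficients, as Tandori does. The work then consists in choosing $\lambda_r\to\infty$ slowly enough that $\sum_r\mu(A_r)$ still diverges, which is always possible for a divergent positive series, while keeping the functions uniformly bounded after the coefficient normalization. I would attempt exactly this refinement, and would otherwise rely on the cited proof of Tandori.
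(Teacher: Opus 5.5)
Your primary answer, citing Tandori~\cite{Tandori1957I} in Leindler's normalized partial-sum form~\cite[Satz~B]{Leindler1960}, is exactly what the paper does, since it gives no proof of this proposition and simply quotes it. Your reconstruction sketch could not replace that citation as written. The standard discretized Hilbert-matrix construction produces functions of height $\asymp\sqrt m$, so your bounded block lemma already needs an additional device. Moreover, $\min\{1,2^r r^2/(\lambda_r l_{2^r})^2\}$ is only the Chebyshev upper bound for $\mu(A_r)$; were it also a lower bound, independence plus a slowly growing $\lambda_r$ would already finish the proof. The real difficulty is therefore producing events of that small measure at level $\lambda_r l_{2^r}\gg 2^{r/2}r$ while keeping the functions uniformly bounded, because transplanting the block to a set of measure $p_r$ multiplies the sup norm by $p_r^{-1/2}$.
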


\begin{lemma}\label{lem:affine-embedding}
Let $(Y,\mathcal A,\nu)$ be a probability space, and let $(\psi_k)_{k\geq1}$ be a uniformly bounded real-valued orthonormal system in $L^2(Y,\nu)$. Suppose for any $k\in\mathbb{N}$,
\begin{align*}
    \int_Y\psi_k(y)\,\nu(\mathrm dy)=0,
\end{align*}
and there exists $B>0$ such that for any $k\in\mathbb{N}$ and $y\in Y$, $|\psi_k(y)|\leq B$.
Then $(Y,\mathcal A,\nu,\mathbf F,\mathbf f,\mathbf g)$ is an admissible system, where $\mathbf F=(F_k)_{k\in\mathbb{N}}$, $\mathbf f=(f_k)_{k\in\mathbb{N}}$, and $\mathbf g=(g_k)_{k\in\mathbb{N}}$ are given by, for any $k\in\mathbb{N}$, $F_k:Y\to[0,1]$ by, for any $y \in Y$,
\begin{align*}
    F_k(y)\coloneqq\frac12+\frac{\psi_k(y)}{2B},
\end{align*}
$f_k\coloneqq1/2$, and $g_k\coloneqq1$. In particular, for any $k\in\mathbb{N}$,
\begin{align*}
    f_k=\int_YF_k(y)\,\nu(\mathrm dy),
\end{align*}
and for any $m,n\in\mathbb{N}$, if $m<n$ then:
\begin{align*}
    \int_Y
    \left(
        \sum_{m<k\leq n}(F_k(y)-f_k)
    \right)^2
    \,\nu(\mathrm dy)
    =
    \frac{1}{4B^2}\left(\Phi(n)-\Phi(m)\right).
\end{align*}
\end{lemma}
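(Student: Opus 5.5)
The plan is to verify the three conditions in the definition of an admissible system directly, and then record the two additional identities; everything reduces to orthonormality and the mean-zero hypothesis.

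First, measurability and the range of $F_k$. Since $\psi_k$ is $\mathcal A$-measurable, so is each $F_k$. From $|\psi_k(y)|\leq B$ we get $\psi_k(y)/(2B)\in[-1/2,1/2]$, hence $F_k(y)\in[0,1]$, so $F_k$ is non-negative and bounded by $1$. The condition $0\leq f_k=1/2\leq 1=g_k$ is immediate. We also have $\Phi(N)=N\to+\infty$. The measure space satisfies $0<\nu(Y)=1<+\infty$. The centring identity follows by integrating:
\begin{align*}
    \int_Y F_k\,\mathrm d\nu=\frac12\nu(Y)+\frac1{2B}\int_Y\psi_k\,\mathrm d\nu=\frac12=f_k .
\end{align*}

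Second, the block identity. For $m<n$ we have $F_k-f_k=\psi_k/(2B)$, so
\begin{align*}
    \int_Y\Bigl(\sum_{m<k\leq n}(F_k-f_k)\Bigr)^2\mathrm d\nu
    =\frac{1}{4B^2}\sum_{m<k,l\leq n}\int_Y\psi_k\psi_l\,\mathrm d\nu
    =\frac{n-m}{4B^2},
\end{align*}
using orthonormality: the diagonal terms equal $1$ and the off-diagonal terms vanish. The expansion of the finite square is legitimate because all functions are bounded, hence in $L^2$. Since $\Phi(n)-\Phi(m)=n-m$, this is exactly the stated identity. It also gives \eqref{eq:block-L2} with $K=1/(4B^2)$, with equality, and this completes admissibility.

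There is no real obstacle here. The only points needing care are that the mean-zero hypothesis is used only for the centring identity, not for the variance computation, which uses orthonormality alone, and that the index conventions match: the sum over $m<k\leq n$ is $D_n-D_m$.
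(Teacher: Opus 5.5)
Your verification is correct, and it is the same routine check the paper relies on: the paper states this lemma without a written proof. You obtain the range $F_k\in[0,1]$ and the centring from $|\psi_k|\le B$ and $\int_Y\psi_k\,\mathrm d\nu=0$, and the block identity, with $K=1/(4B^2)$, from orthonormality and $\Phi(N)=N$; your observation that the mean-zero hypothesis enters only through the centring identity and not the variance computation is also accurate.
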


Suppose~\eqref{eq:HC} does not hold. One obtains from Lemma~\ref{lem:series-comparison} that:
\begin{align}
\label{eq:discrete-divergence}
    \sum_{N\in\mathbb{N}}\left(\frac{\log N}{H(N)}\right)^2=+\infty.
\end{align}
Define, for any $N\in\mathbb{N}$, 
\begin{align*}
    l_N
    \coloneqq
    H(N+1)+\frac32.
\end{align*}
Since $H$ is non-decreasing, $(l_N)_{N\in\mathbb{N}}$ is positive and non-decreasing.

Pick any $N\in\mathbb{N}$. One obtains from $H(N+1)\geq1$ that:
\begin{align*}
    l_N
    \leq
    \frac52 H(N+1),
\end{align*}
and:
\begin{align*}
    \left(\frac{\log N}{l_N}\right)^2
    \geq
    \frac4{25}
    \left(\frac{\log N}{H(N+1)}\right)^2.
\end{align*}
One obtains:
\begin{align*}
    \sum_{N\in\mathbb{N}}
    \left(\frac{\log N}{H(N+1)}\right)^2
    =
    \sum_{M\in\mathbb{N}\setminus\{1\}}
    \left(\frac{\log{(M-1)}}{H(M)}\right)^2,
\end{align*}
One obtains from~\eqref{eq:discrete-divergence} that
\begin{align*}
    \sum_{N\in\mathbb{N}}
    \left(\frac{\log N}{l_N}\right)^2
    =+\infty.
\end{align*}

By applying Proposition~\ref{prop:Tandori-partial-sums}, there exists a uniformly bounded real-valued orthonormal system $(\psi_n)_{n\in\mathbb{N}\cup\{0\}}$ in $L^2([0,1])$ such that for Lebesgue-almost every $x\in[0,1]$,
\begin{align*}
    \limsup_{N\to\infty}
    \frac{
        \left|\sum_{n=0}^{N}\psi_n(x)\right|
    }{
        H(N+1)+3/2
    }
    =+\infty.
\end{align*}
By modifying each function of $(\psi_n)_{n\in\mathbb{N}\cup\{0\}}$ on a common null set if necessary, there exists $B>0$ such that for any $n\in\mathbb{N}\cup\{0\}$ and $x\in[0,1]$, $|\psi_n(x)|\leq B$. By the triangle inequality, one obtains that for any $N\in \mathbb{N}$ and $x\in[0,1]$,
\begin{align*}
    \left|
        \left|\sum_{n=0}^{N}\psi_n(x)\right|
        -
        \left|\sum_{k=1}^{N}\psi_k(x)\right|
    \right|
    \leq
    |\psi_0(x)|
    \leq B.
\end{align*}
Since for any $N\in\mathbb{N}$, $H(N+1)+3/2\geq5/2$. Note that removing the fixed term$\psi_0$ changes the normalized quantities by at most a fixed constant, one obtains that for Lebesgue-almost every $x\in[0,1]$,
\begin{align}
\label{eq:unweighted-divergence}
    \limsup_{N\to\infty}
    \frac{
        \left|\sum_{k=1}^{N}\psi_k(x)\right|
    }{
        H(N+1)+3/2
    }
    =+\infty.
\end{align}

A mean-zero orthonormal system is obtained on $[0,1]$ by a signed two-branch lift that preserves the absolute values of the partial sums. Define the doubling map $T:[0,1]\to[0,1]$ by, for any $x\in[0,1]$,
\begin{align*}
    T(x)
    \coloneqq
    \begin{cases}
        2x, & 0\leq x<1/2,\\
        2x-1, & 1/2\leq x\leq1,
    \end{cases}
\end{align*}
and, for any $k\in\mathbb{N}$, the signed two-branch lift $\varphi_k:[0,1]\to[-B,B]$ by for any $x\in[0,1]$, 
\begin{align*}
    \varphi_k(x)
    \coloneqq
    \begin{cases}
        \psi_k(2x), & 0\leq x<1/2,\\
        -\psi_k(2x-1), & 1/2\leq x\leq1.
    \end{cases}
\end{align*}
One obtains that for any $k\in\mathbb{N}$,
\begin{align*}
    \int_0^1\varphi_k(x)\,\mathrm dx
    &=
    \frac12\int_0^1\psi_k(u)\,\mathrm du
    -
    \frac12\int_0^1\psi_k(u)\,\mathrm du
    =0,
\end{align*}
and for any $j\in\mathbb{N}$,
\begin{align*}
    \int_0^1\varphi_j(x)\varphi_k(x)\,\mathrm dx
    &=
    \int_0^{1/2}\psi_j(2x)\psi_k(2x)\,\mathrm dx
    +
    \int_{1/2}^1\psi_j(2x-1)\psi_k(2x-1)\,\mathrm dx \\
     &=\int_0^1\psi_j(u)\psi_k(u)\,\mathrm du.
\end{align*}
Hence, $(\varphi_k)_{k\in\mathbb{N}}$ is a uniformly bounded mean-zero real-valued orthonormal system in $L^2([0,1])$. One obtains that for any $N\in\mathbb{N}$ and $x\in[0,1]$,
\begin{align}
\label{eq:signed-lift-partial-sums}
    \left|
        \sum_{k=1}^N\varphi_k(x)
    \right|
    =
    \left|
        \sum_{k=1}^N\psi_k(T(x))
    \right|.
\end{align}
Since the map $T$ preserves Lebesgue measure, the preimage under $T$ of a null set is also null. One obtains from~\eqref{eq:unweighted-divergence} and~\eqref{eq:signed-lift-partial-sums} that for Lebesgue-almost every $x\in[0,1]$,
\begin{align}
\label{eq:signed-lift-divergence}
    \limsup_{N\to\infty}
    \frac{
        \left|\sum_{k=1}^{N}\varphi_k(x)\right|
    }{
        H(N+1)+3/2
    }
    =+\infty.
\end{align}

The centred affine embedding completes the transference. By applying Lemma~\ref{lem:affine-embedding} to $(\varphi_k)_{k\geq1}$, one obtains that for any $N\in\mathbb{N}$ $\Phi(N)=N$ and for any $x\in[0,1]$,
\begin{align*}
    D_N(x)
    =
    \frac{1}{2B}
    \sum_{k=1}^{N}\varphi_k(x).
\end{align*}
Note that for any $N\in\mathbb{N}$,
\begin{align*}
    H(\Phi(N)+1)
    +\max_{k\leq N}f_k
    +1
    =
    H(N+1)+\frac32.
\end{align*}
By~\eqref{eq:signed-lift-divergence}, one obtains that for Lebesgue-almost every $x\in[0,1]$,
\begin{align*}
    \limsup_{N\to\infty}
    \frac{
        |D_N(x)|
    }{
        H(\Phi(N)+1)
        +\max_{k\leq N}f_k
        +1
    }
    =+\infty.
\end{align*}
This completes the proof of the necessity part of Theorem~\ref{thm:general-normalization}.

\bibliographystyle{siam}
\bibliography{name}

@book{Harman1998,
  author    = {Harman, Glyn},
  title     = {Metric Number Theory},
  series    = {London Mathematical Society Monographs. New Series},
  volume    = {18},
  publisher = {Clarendon Press, Oxford University Press},
  address   = {Oxford},
  year      = {1998},
  isbn      = {9780198500834},
  doi       = {10.1093/oso/9780198500834.001.0001},
}

@article{LeeScoones2025,
  author  = {Lee, Ying Wai and Scoones, Andrew},
  title   = {Effective results in the metric theory of quantitative {Diophantine} approximation},
  journal = {Advances in Mathematics},
  volume  = {482},
  pages   = {110631},
  year    = {2025},
  doi     = {10.1016/j.aim.2025.110631},
}

@book{Sprindzuk1979,
  author    = {Sprind{\v{z}}uk, Vladimir G.},
  title     = {Metric Theory of Diophantine Approximations},
  series    = {Scripta Series in Mathematics},
  publisher = {V. H. Winston \& Sons; Halsted Press, John Wiley \& Sons},
  address   = {Washington, DC; New York},
  year      = {1979},
  isbn      = {0470267062},
  note      = {Translated from the Russian and edited by Richard A. Silverman; with a foreword by Donald J. Newman},
}

@article{Rademacher1922,
  author  = {Rademacher, Hans},
  title   = {Einige S{\"a}tze {\"u}ber Reihen von allgemeinen Orthogonalfunktionen},
  journal = {Mathematische Annalen},
  volume  = {87},
  number  = {1--2},
  pages   = {112--138},
  year    = {1922},
  doi     = {10.1007/BF01458040},
}

@article{GalKoksma1950,
  author  = {G{\'a}l, I. S. and Koksma, J. F.},
  title   = {Sur l'ordre de grandeur des fonctions sommables},
  journal = {Indagationes Mathematicae},
  volume  = {12},
  pages   = {192--207},
  year    = {1950},
  note    = {Also published in Proceedings of the Koninklijke Nederlandse Akademie van Wetenschappen, 53 (1950), 638--653},
}

@article{Cassels1950III,
  author  = {Cassels, J. W. S.},
  title   = {Some metrical theorems in {Diophantine} approximation. {III}},
  journal = {Proceedings of the Cambridge Philosophical Society},
  volume  = {46},
  number  = {2},
  pages   = {219--225},
  year    = {1950},
  doi     = {10.1017/S0305004100025688},
}

@article{Schmidt1964,
  author  = {Schmidt, Wolfgang M.},
  title   = {Metrical theorems on fractional parts of sequences},
  journal = {Transactions of the American Mathematical Society},
  volume  = {110},
  number  = {3},
  pages   = {493--518},
  year    = {1964},
  doi     = {10.1090/S0002-9947-1964-0159802-4},
}

@article{Philipp1967,
  author  = {Philipp, Walter},
  title   = {Some metrical theorems in number theory},
  journal = {Pacific Journal of Mathematics},
  volume  = {20},
  number  = {1},
  pages   = {109--127},
  year    = {1967},
  doi     = {10.2140/pjm.1967.20.109},
}

@article{PollingtonVelaniZafeiropoulosZorin2022,
  author  = {Pollington, Andrew D. and Velani, Sanju and Zafeiropoulos, Agamemnon and Zorin, Evgeniy},
  title   = {Inhomogeneous {Diophantine} Approximation on {$M_0$}-Sets with Restricted Denominators},
  journal = {International Mathematics Research Notices},
  volume  = {2022},
  number  = {11},
  pages   = {8571--8643},
  year    = {2022},
  doi     = {10.1093/imrn/rnaa307},
}

@article{BakerChenShparlinski2022,
  author  = {Baker, Roger C. and Chen, Changhao and Shparlinski, Igor E.},
  title   = {Large {Weyl} sums and {Hausdorff} dimension},
  journal = {Journal of Mathematical Analysis and Applications},
  volume  = {510},
  number  = {2},
  pages   = {126030},
  year    = {2022},
  doi     = {10.1016/j.jmaa.2022.126030},
}

@article{HaydnNicolVaientiZhang2013,
  author  = {Haydn, Nicolai and Nicol, Matthew and Vaienti, Sandro and Zhang, Licheng},
  title   = {Central Limit Theorems for the Shrinking Target Problem},
  journal = {Journal of Statistical Physics},
  volume  = {153},
  number  = {5},
  pages   = {864--887},
  year    = {2013},
  doi     = {10.1007/s10955-013-0860-3},
}

@article{LevesleyLiSimmonsVelani2025,
  author  = {Levesley, Jason and Li, Bing and Simmons, David and Velani, Sanju},
  title   = {Shrinking targets versus recurrence: The quantitative theory},
  journal = {Mathematika},
  volume  = {71},
  number  = {4},
  pages   = {e70039},
  year    = {2025},
  doi     = {10.1112/mtk.70039},
}

@article{Tandori1957I,
  author  = {Tandori, K{\'a}roly},
  title   = {{\"U}ber die orthogonalen Funktionen. I},
  journal = {Acta Scientiarum Mathematicarum (Szeged)},
  volume  = {18},
  number  = {1--2},
  pages   = {57--130},
  year    = {1957},
}

@article{Leindler1960,
  author  = {Leindler, L{\'a}szl{\'o}},
  title   = {{\"U}ber die orthogonalen Polynomsysteme},
  journal = {Acta Scientiarum Mathematicarum (Szeged)},
  volume  = {21},
  number  = {1--2},
  pages   = {19--46},
  year    = {1960},
}

@article{Moricz1976,
  author  = {M{\'o}ricz, Ferenc},
  title   = {Moment inequalities and the strong laws of large numbers},
  journal = {Z. Wahrscheinlichkeitstheorie verw. Gebiete},
  volume  = {35},
  pages   = {299--314},
  year    = {1976},
  doi     = {10.1007/BF00532956},
}

@article{MoriczSerflingStout1982,
  author  = {M{\'o}ricz, F. A. and Serfling, R. J. and Stout, W. F.},
  title   = {Moment and probability bounds with quasi-superadditive structure for the maximum partial sum},
  journal = {Ann. Probab.},
  volume  = {10},
  number  = {4},
  pages   = {1032--1040},
  year    = {1982},
}

@unpublished{DarwicheSchneider2023,
  author = {Darwiche, Ahmad and Schneider, Dominique},
  title  = {Refinement of {G{\'a}l--Koksma}'s theorems and applications},
  note   = {Preprint, HAL hal-04055387v1},
  year   = {2023},
}

@article{Menshov1923,
  author  = {Menchoff, D.},
  title   = {Sur les s{\'e}ries de fonctions orthogonales},
  journal = {Fundamenta Mathematicae},
  volume  = {4},
  number  = {1},
  pages   = {82--105},
  year    = {1923},
  doi     = {10.4064/fm-4-1-82-105},
}

\end{document}